\newcommand{\R}{\mathbb{R}}
\newcommand{\M}{\mathcal{M}}
\newcommand{\Q}{\mathcal{Q}} 
\newcommand{\G}{\mathcal G}
\newcommand{\g}{\mathfrak g}
\newcommand{\Tr}{\mathrm{Tr}}
\newcommand{\Id}{\mathrm{Id}}
\newcommand{\hvf}[1]{\setbox0=\hbox{$#1$}%
  \ifdim\wd0>1em\widehat{#1}\else\hat{#1}\fi} % Hamiltonian vector field
\newcommand{\gen}{\mathcal{L}} 
\def\rational#1#2{{\mathchoice{\textstyle{#1\over#2}}%
  {\scriptstyle{#1\over#2}}{\scriptscriptstyle{#1\over#2}}{#1/#2}}}
\def\half{\rational12}		% One half
\def\diff{\mathrm{d}}
\def\defn{:=}
\def\tang{\partial}
\begin{document}
\title{Irreversible Langevin MCMC on Lie Groups}
\author{Alexis Arnaudon\inst{1} \and
Alessandro Barp\inst{1,2} \and
So Takao\inst{1}}
\authorrunning{Arnaudon, Barp and Takao}
\institute{Imperial College London \and
Alan Turing Institute
}
\maketitle              % typeset the header of the contribution
\begin{abstract}
It is well-known that irreversible MCMC algorithms converge faster to their stationary distributions than reversible ones. 
Using the special geometric structure of Lie groups $\G$ and dissipation fields compatible with the symplectic structure, we construct an irreversible HMC-like MCMC algorithm on $\G$,
 where we first update the momentum by solving an OU process on the corresponding Lie algebra $\mathfrak g$, and then approximate the Hamiltonian system on $\G\times \g$ with a reversible symplectic integrator followed by a Metropolis-Hastings correction step.
In particular, when the OU process is simulated over sufficiently long times, we recover HMC as a special case. 
 We illustrate this algorithm numerically using the example $\G = SO(3)$.

\keywords{Hamiltonian Monte Carlo \and MCMC \and Irreversible Diffusions \and Lie Groups \and Geometric Mechanics \and Langevin Dynamics \and Sampling.}
\end{abstract}

\section{Introduction}

In this work, we construct an irreversible MCMC algorithm on Lie groups, which generalises the standard Hamiltonian Monte Carlo (HMC) algorithm on $\mathbb R^n$.
The HMC  method~\cite{duane1987hybrid} generates samples from a probability density (with respect to an appropriate reference measure) known up to a constant factor by generating proposals using Hamiltonian mechanics,
 which is approximated by a reversible symplectic numerical integrator and followed by a Metropolis-Hastings step to correct for the bias introduced during the numerical approximations.
The resulting time-homogeneous Markov chain is thus reversible, 
and allow distant proposals to be accepted with high probability,
 which decreases the correlations between samples (for a basic reference on HMC see \cite{neal2011mcmc}, and for a geometric description see \cite{barp2018geometry,betancourt2017geometric}).
However, it is well-known that ergodic irreversible diffusions converge faster to their target distributions~\cite{duncan2016variance,rey2015irreversible}, 
and several irreversible MCMC algorithms based on Langevin dynamics have been proposed~\cite{ottobre2016markov,ottobre2016function}.

From a mechanical point of view, diffusions on Lie groups are important since they form the configuration space of many interesting systems, such as the free rigid body. 
For example in \cite{chen2015constrained} Euler-Poincar\'e reduction of group invariant symplectic diffusions on Lie groups are considered in view of deriving dissipative equations from a variational principle, and in \cite{arnaudon2018noise} Langevin systems on coadjoint orbits are constructed by adding noise and dissipation to Hamiltonian systems on Lie groups. 
The phase transitions of this system were analysed using a  sampling method~\cite{arnaudon2018networks}. 
In lattice gauge theory one typically uses the HMC algorithm for semi-simple compact Lie groups which was originally presented in \cite{kennedy88b} and extended to arbitrary Lie groups in~\cite{barp2019hamiltonianB}, see also~\cite{Kennedy:2012,clark2008tuning,clark2007accelerating}.
In \cite{barp2019homogeneous}, it was shown how to construct HMC on homogeneous manifolds using symplectic reduction, which includes sampling on Lie groups as a special case. 

To construct an irreversible algorithm on Lie groups, we first extend Langevin dynamics to general symplectic manifolds $\M$ based on Bismut's symplectic diffusion process \cite{bismut1981mecanique}. 
Our generalised Langevin dynamics with multiplicative noise and nonlinear dissipation has the Gibbs measure as the invariant measure,
 which allows us to design MCMC algorithms that sample from a Lie group $\G$ when we take $\M = T^*\G$. 
In our Langevin system the irreversible component is determined by Hamiltonian vector fields which are compatible with the symplectic structure, thus avoiding the appearance of divergence terms associated to the volume distortion.
We are then free to choose the noise-generating Hamiltonians to best suit the target distribution.  
Choosing Hamiltonians that only depend on position allows us to proceed with a 
Strang splitting of the dynamics into a position-dependent OU process in the fibres which can be solved exactly, and a Hamiltonian part which is approximated using a leapfrog scheme, followed by a Metropolis-Hastings acceptance/rejection step in a similar fashion to~\cite{ottobre2016markov,ottobre2016function,bou2010patch}.
 Ideally one wants to choose these Hamiltonians to achieve the fastest convergence to stationarity.

On a general manifold, it would be necessary to introduce local coordinates in order to solve the OU process on the fibres, making it difficult to implement. However, since our base manifold is a Lie group, the Maurer-Cartan form defines an isomorphism between the cotangent bundle $T^*\G$ and the trivial bundle $ \G \times \mathfrak g^*$, which, given an inner-product on $\g$, may further be identified with $\G\times \g$.
As a result, one may pull back the OU process on $T_g^* \G$ to an OU process on $\mathfrak g$
 for any $g \in \G$, thus avoiding the problem of having to choose appropriate charts. Hence on Lie groups, we obtain a practical irreversible MCMC algorithm which generalises the $\mathbb R^n$-version of the irreversible algorithm considered in~\cite{ottobre2016markov,ottobre2016function}.

Finally, we simulate this algorithm in the special case $\G = SO(3)$ and perform a Maximum Mean Discrepancy (MMD) test to show that on average, the irreversible algorithm converges faster to the stationary measure than the corresponding reversible HMC on $SO(3)$.

%%%%%%%%%%%%%%%%%%%%%%%%%%%%%%%%%%%%%%%%%%%%%%%%%
\section{Diffusions on Symplectic Manifolds}
We consider diffusion processes on symplectic manifolds $(\M,\omega)$, where we have a natural volume form $\omega^n$, and define the canonical Poisson bracket $ \{g,f\}\defn \omega(X_f,X_g)=X_g f $, where $X_g$ is the Hamiltonian vector field associated to the Hamiltonian $g:\M\to \R$, i.e. $\diff g = \iota_{X_g} \omega$.
Given arbitrary functions $H$ and $H_i$ for $i=1, \ldots, m$ on $\M$, we consider the SDE
\begin{equation} \label{symplectic-SDE}
\diff Z_t = \Big( X_{H}(Z_t)-\frac{\beta}{2}\sum_{i=1}^m\{H_i,H\}X_{H_i}(Z_t) \Big) \diff t + \sum_{i=1}^m X_{H_i}(Z_t) \circ \diff W^i_t\, ,
\end{equation} 
which has the generator, or forward Kolmogorov operator 
\begin{align}
&\gen f = -\{ f,H \}  -\frac{\beta}{2} \sum_{i=1}^m \{H,H_i \} \{f,H_i\}+\frac12 \sum_{i=1}^m \{\{f,H_i\},H_i\}\, .
\end{align} 
To show that the Gibbs measure is an invariant measure for \eqref{symplectic-SDE}, we will need the following lemma:
 \begin{lemma}\label{vanishing-bracket}
For a symplectic manifold $(\M,\omega)$ and two functions $f,g \in C^\infty(\M)$ such that either $\partial \M = \varnothing$ or $g|_{\partial M}=0$, we have the following identity
\begin{align*}
\int_{\M} \{f,g\} \omega^n = 0\, .
\end{align*}
\end{lemma}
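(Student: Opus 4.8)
The plan is to recognise the integrand $\{f,g\}\,\omega^n$ as an exact top-degree form and then invoke Stokes' theorem, the boundary hypotheses being exactly what is needed to annihilate the boundary contribution. First I would rewrite the bracket using the definition given in the text, namely $\{f,g\} = X_f g$, so that $g$ (rather than $f$) appears undifferentiated; this choice is essential because only $g$ is assumed to vanish on $\partial\M$.

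The central fact to establish is Liouville's theorem: every Hamiltonian vector field preserves the symplectic volume. By Cartan's magic formula together with $\iota_{X_f}\omega = \diff f$ and $\diff\omega = 0$, one gets $\mathcal{L}_{X_f}\omega = \diff\,\iota_{X_f}\omega + \iota_{X_f}\diff\omega = \diff(\diff f) = 0$, hence $\mathcal{L}_{X_f}\omega^n = 0$ as well. Since $g$ is a scalar, the Leibniz rule for the Lie derivative then yields $\mathcal{L}_{X_f}(g\,\omega^n) = (X_f g)\,\omega^n + g\,\mathcal{L}_{X_f}\omega^n = \{f,g\}\,\omega^n$.

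Next I would observe that $g\,\omega^n$ is a top-degree form, so $\diff(g\,\omega^n) = 0$, and Cartan's formula collapses to $\mathcal{L}_{X_f}(g\,\omega^n) = \diff\,\iota_{X_f}(g\,\omega^n)$. Combining this with the previous step shows the integrand is exact, $\{f,g\}\,\omega^n = \diff\big(g\,\iota_{X_f}\omega^n\big)$, where the scalar $g$ has been pulled out of the contraction. Stokes' theorem then gives $\int_\M \{f,g\}\,\omega^n = \int_{\partial\M} g\,\iota_{X_f}\omega^n$.

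Finally I would dispose of the boundary term under either hypothesis. If $\partial\M = \varnothing$ the right-hand side is vacuously zero; if instead $g|_{\partial\M} = 0$, the pullback of $g\,\iota_{X_f}\omega^n$ to $\partial\M$ vanishes pointwise owing to the scalar factor $g$, so the boundary integral is again zero. The only point requiring care — and the main, if mild, obstacle — is the bookkeeping at the boundary: one must keep $g$ undifferentiated throughout so that $g|_{\partial\M}=0$ can be applied directly, which is precisely why the asymmetric rewriting $\{f,g\} = X_f g$ in the first step is the correct starting point rather than the equivalent $\{f,g\} = -X_g f$.
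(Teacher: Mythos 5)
Your proof is correct and is the standard argument: the paper itself does not prove the lemma but defers to \cite{cruzeiro2018momentum}, where essentially the same chain $\{f,g\}\,\omega^n=\mathcal{L}_{X_f}(g\,\omega^n)=\diff\big(g\,\iota_{X_f}\omega^n\big)$ followed by Stokes' theorem is used, and your care in keeping $g$ undifferentiated is exactly right for the boundary case. The only implicit hypothesis worth flagging is that Stokes' theorem requires $g\,\iota_{X_f}\omega^n$ to have compact support (or $\M$ compact, or suitable decay) for the boundary-free case on a noncompact manifold, an assumption the lemma statement leaves tacit.
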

For the proof, see~\cite{cruzeiro2018momentum}, Section 4.3.
Hence \eqref{symplectic-SDE} enables us to build MCMC algorithms on any symplectic manifold, and in particular
 the cotangent bundle of Lie groups, that converge to the Gibbs measure:
\begin{theorem}
Given a symplectic manifold $(\M,\omega)$ without boundary, equation \eqref{symplectic-SDE} on $\M$ has the Gibbs measure
\begin{align*}
\mathbb P_\infty(z) = p_{\infty} \omega^n \defn \frac1Z e^{-\beta H (z)} \omega^n, \quad Z = \int_\M e^{-\beta H (z)} \omega^n
\end{align*}
as its stationary measure for any choice of $H_i:\M \to \R$ where $i=1,\ldots,m$.
\end{theorem}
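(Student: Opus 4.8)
The plan is to verify stationarity weakly, i.e. to show $\int_\M \gen f\, \diff\gibbs = 0$ for every $f \in C^\infty(\M)$, which is equivalent to invariance of $\gibbs$ under \eqref{symplectic-SDE} (i.e. $\gen^* p_\infty = 0$). Writing $p_\infty = e^{-\beta H}/Z$, the whole computation rests on an integration-by-parts identity that follows from Lemma~\ref{vanishing-bracket}: applying the lemma to the product $BC$ and using that the Poisson bracket is a derivation in its second slot, $\{A,BC\} = B\{A,C\} + C\{A,B\}$, one obtains on the boundaryless manifold $\M$
\begin{equation*}
\int_\M \{A,B\}\,C\,\omega^n \;=\; -\int_\M B\,\{A,C\}\,\omega^n, \qquad A,B,C \in C^\infty(\M).
\end{equation*}
I would also record the chain rule $\{\,\cdot\,,e^{-\beta H}\} = -\beta\,e^{-\beta H}\{\,\cdot\,,H\}$, again because the bracket is a derivation.

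First I would dispatch the drift term $-\{f,H\}$ of $\gen f$ on its own: by the chain rule $\{f,H\}\,e^{-\beta H} = -\beta^{-1}\{f,e^{-\beta H}\}$, so Lemma~\ref{vanishing-bracket} applied with second argument $e^{-\beta H}$ gives $\int_\M \{f,H\}\,p_\infty\,\omega^n = 0$. The substance of the theorem is then the cancellation of the remaining two terms. For the diffusion term I would rewrite the iterated bracket antisymmetrically as $\{\{f,H_i\},H_i\} = -\{H_i,\{f,H_i\}\}$ and apply the identity above with $A = H_i$, moving the outer bracket onto $p_\infty$ to obtain $\int_\M \{f,H_i\}\,\{H_i,p_\infty\}\,\omega^n$; the chain rule then converts $\{H_i,p_\infty\}$ into $-\beta\,p_\infty\{H_i,H\}$.

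After these manipulations the diffusion term reads $-\tfrac{\beta}{2}\sum_i \int_\M \{f,H_i\}\{H_i,H\}\,p_\infty\,\omega^n$, whereas the dissipation term is $-\tfrac{\beta}{2}\sum_i \int_\M \{H,H_i\}\{f,H_i\}\,p_\infty\,\omega^n$. Since $\{H,H_i\} = -\{H_i,H\}$ by antisymmetry and ordinary multiplication commutes, these two integrals are exact negatives and cancel. Thus $\int_\M \gen f\,p_\infty\,\omega^n = 0$ for all $f$, which is precisely stationarity.

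I expect the principal difficulty to be bookkeeping rather than anything deep: keeping the sign convention $\{g,f\} = X_g f$ consistent throughout, and choosing the correct slot to play the role of $A$ when invoking the integration-by-parts identity. The one conceptual point worth emphasising is that the cancellation is not a coincidence --- it pins down the precise factor $\beta/2$ multiplying the dissipation drift in \eqref{symplectic-SDE}, so a clean write-up should make visible exactly where this balance between drift and diffusion enters. The remaining technical caveat, that it suffices to test against smooth $f$ and that no boundary contributions survive, is handled precisely by the hypothesis $\partial\M = \varnothing$ feeding into Lemma~\ref{vanishing-bracket}.
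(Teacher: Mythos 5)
Your proof is correct and follows essentially the same route as the paper's: both verify stationarity weakly via Lemma~\ref{vanishing-bracket}, using the derivation property of the Poisson bracket to integrate by parts and exhibiting the exact cancellation between the dissipation drift and the diffusion term at the factor $\beta/2$. The only cosmetic difference is that the paper first computes the full adjoint $\gen^*$ acting on a general test density $g$ and then substitutes $g = p_\infty$, whereas you specialize to $p_\infty$ from the outset and shortcut via the chain rule $\{\cdot, e^{-\beta H}\} = -\beta e^{-\beta H}\{\cdot, H\}$.
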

\begin{proof}
Using the Leibniz rule $\{fg,h\} = f\{g,h\} + g\{f,h\}$, we have 
\begin{align*}
g\{f,H_i\}\{H ,H_i\}&=\{gf,H_i\}\{H ,H_i\}-f\{g,H_i\}\{H ,H_i\} \\
%=\{GF\{H ,H_i\},H_i\}\\
%&-GF\{\{H ,H_i\},H_i\}-F\{G\{H ,H_i\},H_i\}+FG\{\{H ,H_i\},H_i\}\\
&= \cdots =
\{gf\{H ,H_i\},H_i\}-f\{g\{H ,H_i\},H_i\}
\end{align*}
and similarly
\begin{align*}
g\{\{f,H_i\},H_i\}=\{g\{f,H_i\},H_i\}-\{f\{g,H_i\},H_i\}+f\{\{g,H_i\},H_i\}.
\end{align*}

Hence one can compute the $L^2(\M,\omega^n)$-adjoint of the operator $\mathcal L$ as follows
\begin{align*}
&\int_{\M} g (\mathcal L f) \,\omega^n = \int_M g\left( -\{f, H \} - \frac{\beta}{2}\sum_{i=1}^m \{f,H_i\}\{H ,H_i\} + \frac12 \sum_{i=1}^m \{\{f,H_i\}, H_i\}\right) \,\omega^n\\
&= \int_{\M} \left(-\{fg,H \} - \frac{\beta}{2}\{fg\{H ,H_i\}, H_i\} + \frac12 \left(\{g\{f,H_i\},H_i\} - \{f\{g,H_i\},H_i\}\right)\right) \omega^n \\
&\quad + \int_{M} f\left(\{g,H \} + \frac{\beta}{2}\{g\{H ,H_i\},H_i\} + \frac12 \{\{g,H_i\},H_i\}\right) \omega^n \\
&= \int_{\M} f\left(\{g,H \} + \frac{\beta}{2}\{g\{H ,H_i\},H_i\} + \frac12 \{\{g,H_i\},H_i\}\right) \omega^n = \int_{\M} f (\mathcal L^* g) \,\omega^n,
\end{align*}
where we have used Lemma \ref{vanishing-bracket} to integrate the Poisson brackets to $0$.
 Hence, we obtain the Fokker-Planck operator
\begin{align*}
\mathcal L^* g = \{g,H \} + \frac{\beta}{2} \sum_{i=1}^m\{g\{H ,H_i\},H_i\} + \frac12 \sum_{i=1}^m\{\{g,H_i\},H_i\}.
\end{align*}

Now, by the derivation property of the Poisson bracket, $\{f \circ g, h\} = f'\circ g \{g,h\}$ and noting that $p_\infty'(H)= -Z^{-1} \beta e^{-\beta H} = -\beta p_\infty(H)$, one can check that
\begin{align*}
\mathcal L^* p_\infty = p_\infty'(H) \{H,H \} + &\frac{\beta}{2} \sum_{i=1}^m\{p_\infty(H)\{H ,H_i\},H_i\} \\
&- \frac{\beta}{2} \sum_{i=1}^m\{p_\infty(H)\{H,H_i\},H_i\} = 0.
\end{align*}
Therefore $\mathbb P_\infty(z) = p_\infty(z) \omega^n$ is indeed an invariant measure for \eqref{symplectic-SDE}.
\end{proof}

If $\M=T^*\Q$ is the cotangent bundle of a manifold without boundary $\Q$, we define the marginal measure $\mathbb P^1_\infty$ on $\Q$ by
\begin{align}
\int_A \mathbb P^1_\infty = \int_{T^*A} \iota^* \mathbb P_\infty,
\end{align}
for any measurable set $A \subset \Q$, where $\iota : T^*A \rightarrow T^*\Q$ is the inclusion map.
In addition, if $(\Q,\gamma)$ is a Riemannian manifold, we can consider the Hamiltonian function $H(q,p) = \frac12 \gamma_q(p,p) + V(q)$, for $(q,p) \in T^*\Q $, and the marginal invariant measure $\mathbb P_\infty^1(\diff q)$ of the process \eqref{symplectic-SDE} is simply
\begin{align*}
\mathbb P_\infty^1(\diff q) = \frac1{Z_1} e^{-V(q)} \sqrt{|g|} \diff q, \quad Z_1 = \int_\Q e^{-V(q)} \sqrt{|g|} \diff q,
\end{align*}
where $\sqrt{|g|} \diff q$ is the Riemannian volume form.

The MCMC algorithm which we will derive in section \S\ref{langevin_G} is based on a Strang splitting of the dynamics \eqref{symplectic-SDE} into a Hamiltonian part and a Langevin part. 
Hereafter, we identify $T^*\Q$ with $T\Q$ through the metric and just consider the dynamics on $T\Q$ instead of $T^*\Q$.

%%%%%%%%%%%%%%%%%%%%%%%%%%%%%%%%%%%%%%%%%%%%%

\section{Irreversible Langevin MCMC on Lie Groups}\label{langevin_G}

Consider a $n$ dimensional Lie group $\G$ and let $e_i, \theta^i, i=1,\ldots, n$ be an orthonormal basis of left-invariant vector fields and dual one-forms respectively. 
 We consider $H =V\circ \pi+T : T\G \rightarrow \mathbb R$, 
 where $T$ is the kinetic energy associated to a bi-invariant metric on $\G$ and $V \propto \log \chi : \G \rightarrow \mathbb R$ is the potential energy, where $\chi$ is the distribution we want to sample from on $\G$. We let $v^i:T\G \to \R$ be the fibre coordinate functions with respect to the left-invariant vector fields, $v^i(g,u_g) \defn \theta^i_g(u_g)$.

Vector fields tangent to $T\G$ (i.e., elements of $\Gamma(TT\G)$) can be expanded in terms of left-invariant vector fields $e_i$ and the fibre-coordinate vector fields $\partial_{v^i}$, (i.e., $\Gamma(TT\G) \cong \Gamma(T\G \oplus T\g)$).
We consider noise Hamiltonians that depend only on position, $H_i=U_i \circ \pi$ where $U_i: \G \to \R$, so the corresponding Hamiltonian vector fields can be written as $X_{H_i}=-e_j(U_i) \partial_{v^j}$, (see \cite{barp2019hamiltonianB}).
Hence the stochastic process \eqref{symplectic-SDE} on $T\G$ can be split up into a Langevin part
\begin{align} 
	\diff Z_t &=\frac{\beta}{2} \sum_{i=1}^m X_{H} (H_i)X_{H_i}(Z_t)  \diff t + \sum_{i=1}^m X_{H_i}(Z_t) \circ \diff W^i_t \, , \nonumber \\
&=-\frac{\beta}{2} \sum_{i=1}^m v^k e_k(U_i) e_j(U_i)\partial_{v^j}(Z_t)  \diff t - \sum_{i=1}^me_j(U_i)\partial_{v^j}(Z_t) \circ \diff W^i_t\, .\label{ou-abstract}
\end{align}
and a Hamiltonian part
\begin{align} \label{ham-eq}
\frac{\diff Z_t}{\diff t} = X_H(Z_t).
\end{align}

Note the geodesics are given by the one-parameter subgroups, with Hamiltonian vector field $X_T = v^ke_k$. 
Since $X_{H_i}$ only has components in the fibre direction $\partial_{v^i}$ (i.e., it has no $e_i$ components along $\G$) the diffusion starting at any point $(g,v)\in T\G$ remains in $T_g\G$, i.e., with the same base point $g$. 
When $\G = \R^n$ and we use the standard kinetic energy $T = \frac{1}{2} \|v\|_{\mathbb R^n}^2$, then vector fields become gradients, i.e. $e_j =\partial_{q^j}$, and equation \eqref{ou-abstract} becomes the space dependent Langevin equation for $(q,v) \in T\mathbb R^n$,
\begin{equation}
\dot q=0,\qquad \diff v_t = -\frac{\beta}{2}\nabla_qU_i(q) \nabla_q U_i(q)^T v_t  \diff t-\nabla_qU_i(q) \circ \diff W^i_t\, . 
\end{equation} 

Now let $\xi_i \defn e_i(1)$ be a basis of the Lie algebra $\mathfrak g$, where $1$ is the identity. 
Then $e_i(g) = \tang_1 L_g \xi$
and we can identify $T\G$ with $\G \times \g$ through the relation $(g,v^ie_i(g))\sim (g,v^i \xi_i)$.
In other words, we may now think of $v^i$ as the Lie algebra coordinate functions $v^i:\G\times \g \to \R$ with $v^i(g,u) = \theta^i_1(u)$, and since $\g$ is a vector space, we can identify $\partial_{v^i} \sim \xi_i$ and write 
\begin{align}
    X_{H_i}(g,v)= -e_j|_g(U_i) \xi_j =: \sigma_{ji}(g) \xi_j\, .
\end{align}
for $i = 1,\ldots,m$ and $j = 1, \ldots, n$. For matrix Lie groups, this becomes
\begin{align}
    X_{H_i}(g,v)= - \text{Tr} \big( \nabla U_i^T g \xi_j \big) \xi_j = \sigma_{ji}(g) \xi_j ,
\end{align}
where $(\nabla U_i)_{ab} \defn \partial_{x_{ab}}U_i$, where $x_{ab}$ are the matrix coordinates of $g\in \G$.
The Langevin equation \eqref{ou-abstract} can then be written as 
\begin{equation}
\dot g=0,\qquad \diff v_t = -\frac{\beta}{2} (\sigma(g) \sigma(g) ^T)_{jk} v^k_t \xi_j \diff t + \sigma_{ji}(g)\xi_j \diff W^i_t ,
	\label{OU-G}
\end{equation}
and if we identify $\g \sim \R^n$, $v^i\xi_i \sim \mathbf v \in \mathbb R^n$, we get a standard OU process on $\R^n$
\begin{equation}
 \diff \mathbf v_t = -\frac{\beta}{2} \sigma \sigma ^T \mathbf v_t  \diff t +\sigma \diff \mathbf W_t \, , 
\end{equation}
for the vector-valued Wiener process $\mathbf W_t = (W_t^1, \ldots, W_t^m) \in \mathbb R^m$. Note that the noise term can be interpreted as an It\^o integral since the diffusion coefficient $\sigma$ does not depend on $\mathbf v$.
We can solve this Langevin equation explicitly if the matrix $D = \sigma \sigma^T$ is invertible. 
This is the case if the vectors $\nabla U_i$ form an orthonormal basis of $\mathbb R^n$, or more generally if they satisfy the H\"ormander condition.  
The explicit solution is then given by 
\begin{align}
  v_{t+h} = e^{- \frac{\beta}{2} Dh } v_t + \sigma \int_t^{t+h} e^{-\frac{\beta}{2} D(t+h-s)} \diff \mathbf W_s\, , 
\end{align}
which has the transition probability
\begin{align}
  \begin{split}
  p(v_0,v) &= \frac{1}{(2\pi)^{n/2} |\mathrm{det}{\Sigma_h}|} \,\exp\left({-\frac12 \left \|v - e^{-\frac{\beta}{2} D h }v_0 \right \|_{\Sigma_h^{-1}}^2}\right),\\
  \mathrm{where}\quad   \Sigma_h &= 
  \frac{1}{\beta} \left ( \Id - e^{-\beta D h}\right) \,.
  \end{split}
\end{align}
 
\subsection{MCMC algorithm on Lie groups}

From the Langevin system considered in the previous section, we can construct the following MCMC algorithm to sample from the distribution $\chi := e^{-\frac{\beta}{2}V}$ on $\G$.  
Starting from $(g_0,v_0) \in T\G$, we iterate the following
\begin{enumerate}
	\item Solve equation \eqref{OU-G} exactly until time $h$ by sampling
	  \begin{align}
	    v^* \simeq \mathcal N \left(e^{-\frac{\beta}{2} D h} v_0, \Sigma_h\right)
	  \end{align}
	  to obtain $(\bar{g}_0,\bar{v}_0) = (g_0, v^*)$;
	\item Approximate the Hamiltonian system \eqref{ham-eq} using $N$ Leapfrog trajectories with step size $\delta>0$. 
	Starting at $(\bar{g}_0,\bar{v}_0) = (g_0,v^*)$:\\\\\
{\bf For} $k = 0,\ldots, N-1$: \footnote{For a non-matrix group, simply replace $\Tr\left(\partial_x V^T \bar{g}_k \xi_i\right)$ with $e_i|_g(V)$.}
	      \begin{align*}
  		\bar{v}_{k+\half}&= \bar{v}_k -\frac{\delta}{2}\Tr\left(\partial_x V^T \bar{g}_k \xi_i\right) \xi_i \\
  		\bar{g}_{k+1} &= \bar{g}_k \exp\left(\delta \,\bar{v}_{k+\half} \right) \\
  		\bar{v}_{k+1} &= \bar{v}_{k+\half} -\frac{\delta}{2} \Tr \left(\partial_x V^T \bar{g}_{k+1} \xi_i\right) \xi_i
	      \end{align*}
		to obtain $(\bar{g}_N,\bar{v}_N)$. The time step $\delta$ and number of steps $N$ are to be tuned appropriately by the users.
	\item Accept or reject the proposal by a Metropolis-Hastings step. We accept the proposal $(\bar{g}_N,\bar{v}_N)$ with probability
\begin{align*}
\alpha = \min \left\{1,\exp \left(-H(\bar{g}_N,\bar{v}_N) + H(\bar{g}_0,\bar{v}_0)\right)\right\},
\end{align*}
and set $(g_1,v_1) = (\bar{g}_N,\bar{v}_N)$. On the other hand, if the proposal is rejected, we set $(g_1,v_1) = (\bar{g}_0, -\bar{v}_0)$.
\end{enumerate}

Notice also that in the limit $h\to \infty$, this algorithm become the standard HMC algorithm, which is reversible, but for finite $h$, the algorithm is irreversible (see \cite{ottobre2016markov} for a detailed discussion).

\section{Example on  $SO(3)$}\label{MCMC-SO3}

As an example we will pick the rotation group $\G = SO(3)$, where the Lie algebra $\mathfrak {so}(3)$ consists of $3\times 3$ anti-symmetric matrices so that the kinetic energy on the Lie algebra is $T(v) = \frac{1}{2}\Tr(v^Tv)$, for $v \in \mathfrak{so}(3)$.  
The potentials on $SO(3)$ will be defined by functions $V, U_i:\R^{3\times 3}\to \R$ on matrices. 
We then pick the potential to be of the form $V(g) = e^{\alpha Tr(g)}$ and isotropic noise with 
\begin{align}
	U_i(g) = \epsilon \Tr(e^{-\xi_i} g)\, \qquad\mathrm{for}\qquad  \mathrm{span}(\xi_i) = \mathfrak{so}(3)\, .  
\end{align}
We then obtain samples $g_t$ on $SO(3)$, which we can project onto the sphere by simply letting the group act on a vector $z = (0,0,1)$, to get $x_t = g_tz$, see panel (a) of figure \ref{fig:simulation}.
From these samples $g_t$, we can also estimate the convergence rate of the MCMC algorithm by computing the maximum mean discrepancy (MMD) between the set of first $N$ samples and the whole sequence, using the values on the diagonal of the matrices $g_t$. 
\begin{figure}[htpb]
	\centering
	\subfigure[MCMC samples]{\includegraphics[width=0.3\textwidth]{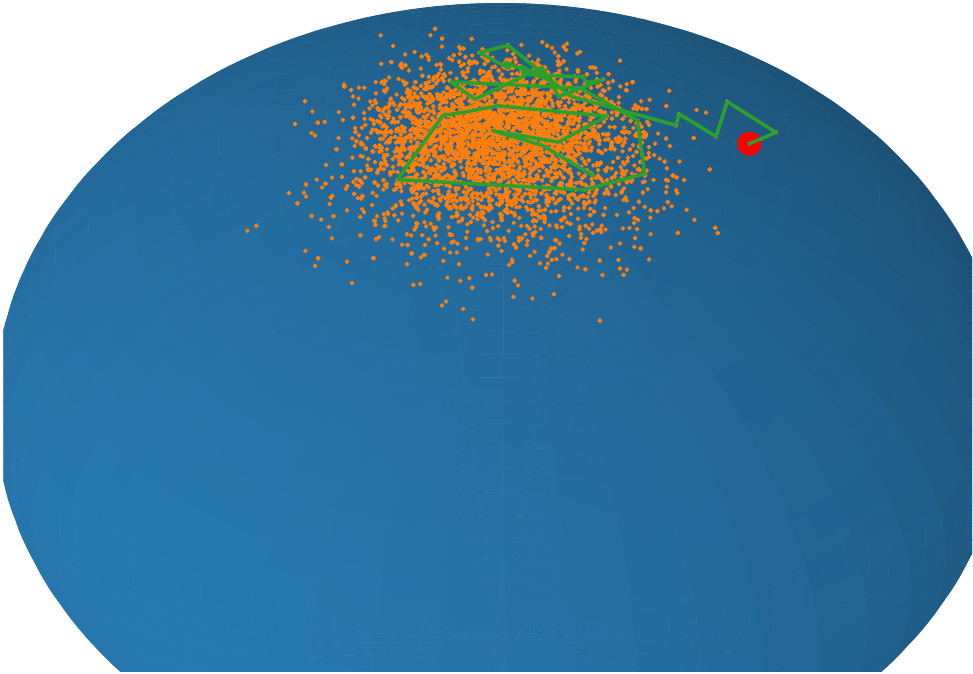}}
	\subfigure[Convergence rate]{\includegraphics[width=0.34\textwidth]{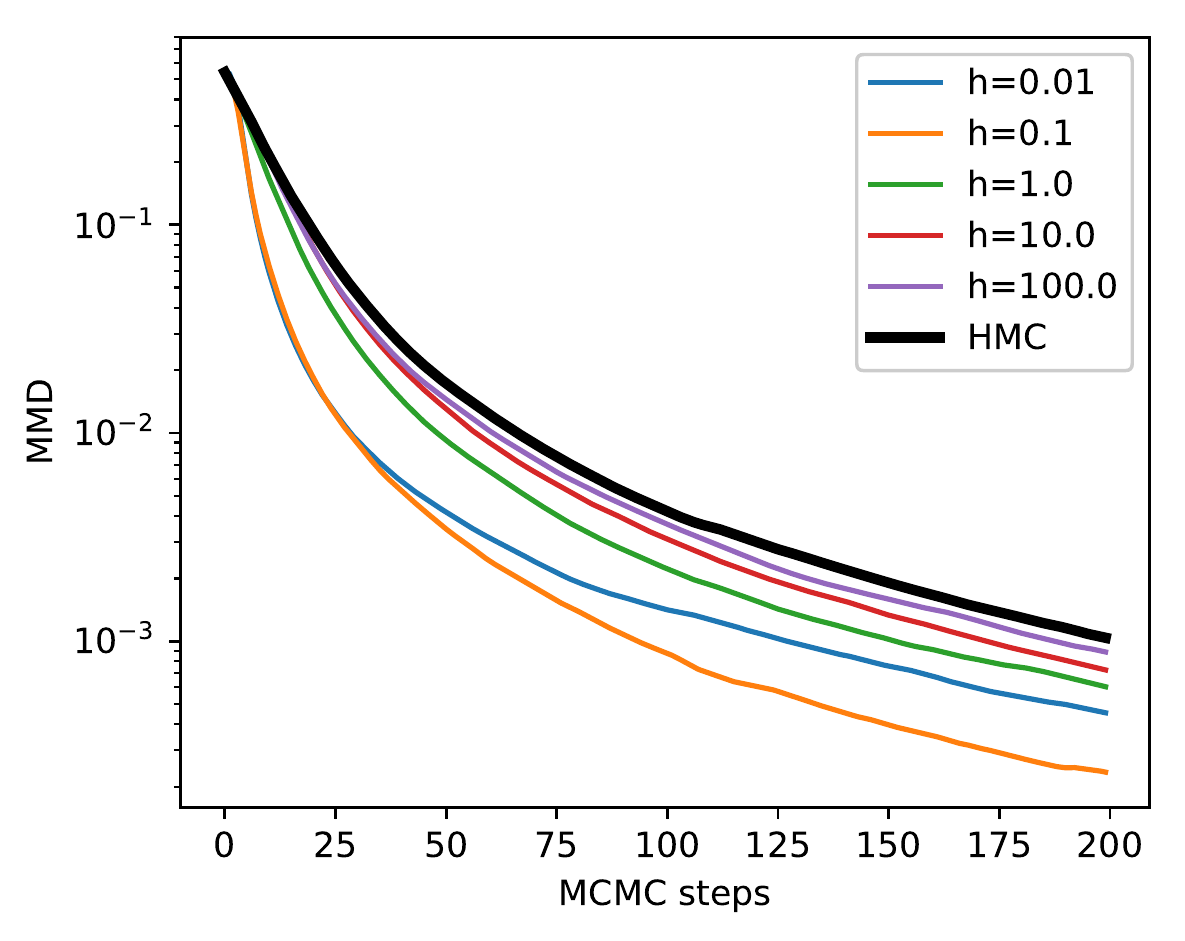}}
	\subfigure[Autocorrelations]{\includegraphics[width=0.34\textwidth]{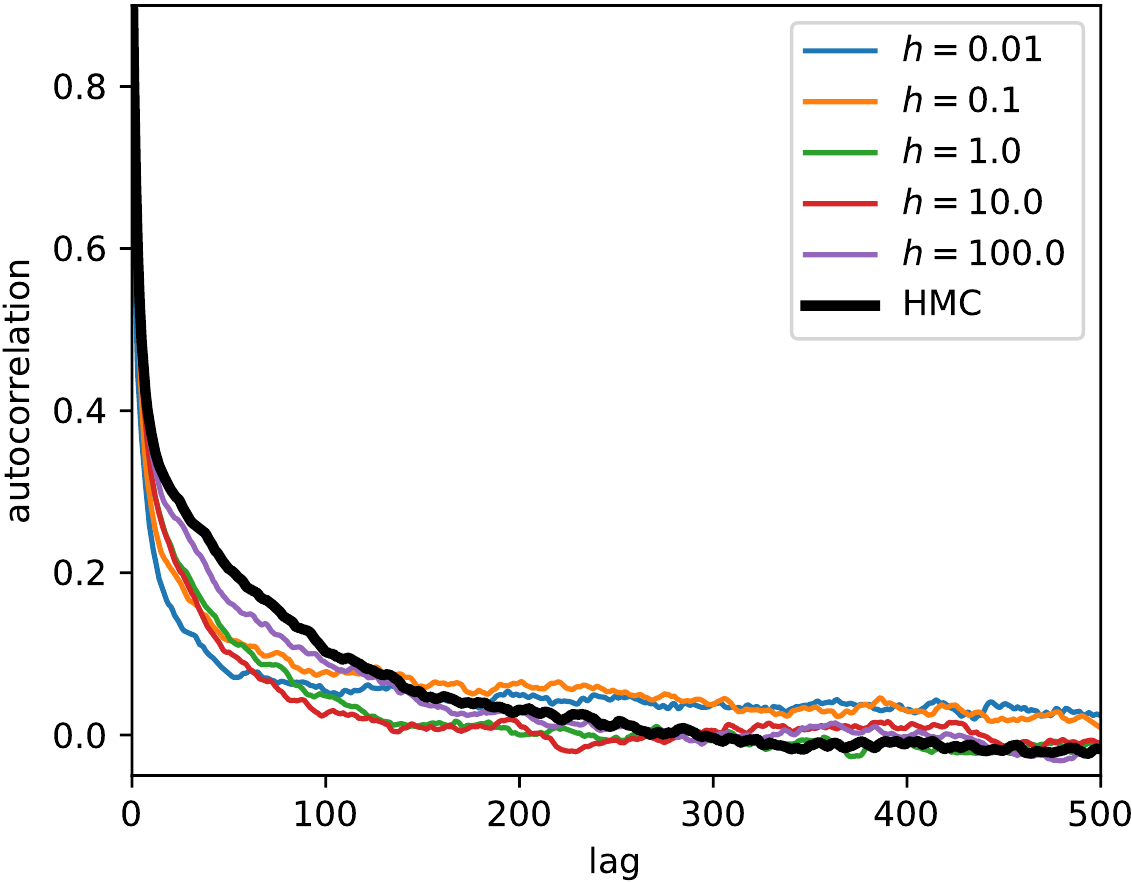}}
	\caption{This figure illustrates the irreversible MCMC algorithm on Lie group with $\G = SO(3)$.
	  For each run, we sampled $5000$ samples, shown in panel (a), along with the first one it red and the next $50$ in green. 
	  We ran several chains with several parameters $h$, corresponding to the integration time of the Langevin dynamics. 
	  For large $h$, the MCMC algorithm converges to the HMC algorithm, also displayed in black. 
	 Each line in panel $(b)$ and $(c)$ are averages over $20$ chains with the same parameters. 
     For the leapfrog integrator we use $5$ timesteps with a total time $\delta=0.5$. The MMD computation on $\G$ has been run with Gaussian kernel of variance $\sigma=1$.  } 
	\label{fig:simulation}
\end{figure}
We see that in figure \ref{fig:simulation}, small values for $h$ give MCMC algorithms with a faster convergence rate than the HMC limit, i.e., $h\to \infty$. 
This is a direct consequence from the irreversibility of the algorithm, as explained in \cite{duncan2016variance,rey2015irreversible}.
Even if a faster convergence is desirable, one has to ensure that the correct distribution is sampled, and if $h$ is taken too small, the algorithm will be close to a pure Hamiltonian dynamics, with additional reversal steps $v\to -v$ when the proposed state is rejected.
We observe this effect already for $h=0.01$ where the convergence is as fast as $h=0.1$ for the first steps of the chain, but then later slows down, as the distribution is not sampled correctly. 

{\bf Acknowledgements}\quad
{\small The authors thank A. Duncan for the useful insights that helped improve this work.
AA acknowledges EPSRC funding through award EP/N014529/1 via the EPSRC
Centre for Mathematics of Precision Healthcare. ST acknowledges the Schr\"odinger scholarship scheme for funding during this work.
AB was supported by a Roth Scholarship funded by the Department of Mathematics, Imperial College London, by EPSRC Fellowship (EP/J016934/3) and by The Alan Turing Institute under the EPSRC grant [EP/N510129/1].}

\bibliographystyle{splncs04}
\bibliography{ReferencesTHMC}

\end{document}